\newtheorem*{acknowledgement}{Acknowledgement}
\newtheorem{corollary}{\bf Corollary}
\newtheorem{proposition}{\bf Proposition}
\newtheorem{remark}{Remark}
\newtheorem{theorem}{\bf Theorem}
\theoremstyle{definition}
\numberwithin{equation}{section}
\title{ Rotational Solitons for the Curve Shortening Flow on Revolution Surfaces}
\author{H. dos Reis$^1$}
\author{B. Leandro$^2$}
\author{R. Novais$^3$}
\address{$^{1, 2, 3}$Universidade Federal de Goi\'as, IME, 131, CEP 74690-900, Goi\^ania, GO, Brazil.}
	\email{$^1$hiuri$\_$reis@ufg.br}
	\email{$^2$bleandroneto@ufg.br}
	\email{$^3$rnovais87@discente.ufg.br}
\keywords{Curve shortening flow, revolution surfaces, soliton.} \subjclass[2020]{Primary 37E35, 53E10.}
\date{\today}
\begin{document}
    \begin{abstract}
        We present a characterization for the rotational soliton for the curve shortening flow (CSF) on the revolution surfaces of $\mathbb{R}^3$. Furthermore, we describe the behavior of such curves by showing that the two ends of each open curve are asymptotic to a parallel geodesic. 
    \end{abstract}
    
    \maketitle
    \section{Introduction}\label{intro}
        The curve shortening flow (CSF) is an evolution equation involving the geodesic curvature of a given curve in some ambient space. This flow and its generalization to higher dimensions (mean curvature flow) have been studied in material science for almost a century to model things such as cell, grain, and bubble growth. Related flows have also been used to model other physical phenomena and in image processing (cf. \cite{cao2003,Colding} and the references therein).
        
        Imagine a curve evolving by the CSF as a curve sliding across a surface like an elastic band so that the instantaneous velocity at each point is proportional to the geodesic curvature of the curve at that point. Geodesics have zero geodesic curvature, so they do not move. Naturally, it is to be imagined that some closed curves will evolve towards geodesics and others will evolve towards collapsing at a point \cite{Gage}. On the other hand, open curves could expand \cite{halldorsson2012}.    
        
        Formally, a map $\widehat{\Phi} : U \times I\rightarrow \mathbb{M}^{2}\subseteq\mathbb{R}^{3}$,  $I=[0,\,\mathbf{T})$ and $U\subseteq\mathbb{R}$,  which is a family of curves, is a solution of curve shortening flow if     
        \begin{equation}\label{csf}
            \langle\partial_{t}\widehat{\Phi}^{t}(s),\eta^{t}(s)\rangle={\kappa}^{t}(s),
        \end{equation}
        where $\widehat{\Phi}^{t}(s)=\widehat{\Phi}(s,\,t)$, $s\in U$ and $t\in I$, with the initial curve $\widehat{\Phi}^{0}:=\Phi$. Moreover, $\widehat{\eta}^{t}$ and $\widehat{\kappa}^{t}$ stand for the unit normal vector and the geodesic curvature of $\widehat{\Phi}^t(s)$, respectively. When $\Phi$ is a geodesic, the family $\widehat{\Phi}^t$ gives a trivial solution to the curve shortening. This paper considers a special type of CSF by imposing that the $\widehat{\Phi}^t$ curves evolve by isometries. Equivalently, we say that $\widehat{\Phi}^{t}$ is a soliton solution for the CSF (cf. \cite{Hungerbuhler}) if there exists a Killing vector field with associated flow $\widehat{\Phi}^{t}$ such that
        \begin{eqnarray}\label{5}
            \widehat{\Phi}^{t}(s)=\Psi^{t}\left(\Phi(s)\right).
        \end{eqnarray}
        Solitons are relevant because they represent a class of solutions with very special properties, e.g., they appear as blow-ups of singularities of the mean curvature flow.
        
        In $\mathbb{R}^2$, self-similar solutions for the CSF were already classified by Halldorsson in \cite{halldorsson2012}. However, little is known about such curves when they evolve on surfaces other than the plane. In \cite{halldorsson2015}, the author classified the self-similar solutions to the mean curvature flow in $\mathbb{R}^{1,1}$. Recently, the soliton solutions for the CSF were also classified in the sphere by Tenenblat and dos Reis \cite{Hiuri1}. Although these works classified CFS solutions, they did not provide explicit solutions. A classification of such solutions in $2$-dimensional hyperbolic space was provided in \cite{Silva2}. Explicit solutions for the CSF are rare. In \cite{Silva2021}, the authors explicitly provided soliton solutions for the curve shortening flow on the light cone. Nonetheless, a well-known explicit soliton example is the {\it Grim Reaper} solution, the graph of the function $f(x)=-\ln\cos(x)$ in the $xy$-plane, where $x\in(-\pi/2,\,\pi/2)$.     
        
        {\it Does every family of curves satisfying equation \eqref{csf} which has a subsequence converging to a geodesic converge uniquely to this geodesic?} The answer to this question is affirmative for the sphere (cf. \cite{Gage,Hiuri1}). Moreover, it is well-known that uniqueness is guaranteed if the surface in the neighborhood of the geodesic has negative curvature (cf. \cite{Silva2,woolgar}). However, as far as we know, the question of geodesics on surfaces with mixed Gauss curvature remains open.
        
        Here, we prove that the two ends of a rotational soliton solution for the curve shortening flow on a revolution surface are asymptotic to the parallel geodesics. They are asymptotic to the geodesics orthogonal to the axis of rotation, i.e., the $z$-axis. We are considering open and closed solitons, respectively, Theorem \ref{theoconverge} and Corollary \ref{teoexistence}.
        
        Among several interesting results proved by Garcke and N\"urnberg, in \cite{Garcke} they presented variational approximations of boundary value problems for curve shortening flow and curve straightening flow in two-dimensional Riemannian manifolds that are conformally flat.
        
        Up to isometries of $\mathbb{R}^3$, we can consider the revolutions surface $\mathbb{M}^{2}$ in $\mathbb{R}^{3}$ rotating along the $z-$axis. Consider a generating curve $\alpha(v)=(\phi(v),0,\,\psi(v))$ such that $\phi(v)>0$ (cf. \cite{CARMO GD}) given by 
       \begin{equation}\label{ROT}
            X(u,\,v)=(\phi(u)\cos(v),\,\phi(u)\sin(v),\,\psi(u)),
        \end{equation}
        where $\phi$ and $\psi$ are smooth functions, in which $u\in I\subseteq \mathbb{R}$ and $0<v<2\pi$. The metric tensor is given by
        
        \begin{eqnarray}\label{metrictensor}
            g=du^{2}+\phi^{2}(u)dv^{2}.
        \end{eqnarray}  
        
        \begin{remark}
       If $I$ is bounded at any of its extremes, we ask that $\phi$ and $\psi$ must be identically zero at this point. Thus, the tangent plane will be well-defined at these points.
        \end{remark}

        Rotations around the $z$-axis are isometries in $\mathbb{M}^2$. For this reason, we will consider the rotational solitons, i.e., solutions to CSF that rotate around the axis of the surface of revolution. Moreover, it is worth pointing out that a soliton $\Phi(s)=X(u(s),\,v(s))$ of the CSF in ambient space $\mathbb{M}^2$ is defined for every real parameter since $\mathbb{R}^2$ is the covering space for the revolution surface $\mathbb{M}^{2}$. 
        
        We now state our main results. The following result characterizes the solitons on the revolution surface. Similar results for CSF solitons in space forms, Lorenzian two-space, and light cone can be found in \cite{halldorsson2012,halldorsson2015,Hiuri1,Silva2021} and \cite{Silva2}. It is interesting to point out that we are considering a particular case of \cite[Definition 3.1]{alias}.

        \begin{theorem}\label{caracROT}
        Let $\Phi:I\subseteq \mathbb{R}\rightarrow \mathbb{M}^{2}$ be a regular curve  parametrized by arc length. Then $\Phi(s)=X(u(s),\,v(s))$ is a revolution soliton to the curve shortening flow if and only   
        \begin{eqnarray}\label{kgeoROT}
        \kappa(s)=a\phi(s) u'(s),
        \end{eqnarray}
        where $\kappa$ is the geodesic curvature of $\Phi$ and $a$ is a constant. If $a=0$ we have a geodesic. 
        \end{theorem}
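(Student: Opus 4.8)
The plan is to reduce the soliton equation \eqref{5} together with the flow equation \eqref{csf} to a single pointwise, time-independent identity relating the geodesic curvature of $\Phi$ to the normal component of the Killing field that generates the rotation, and then to evaluate that identity in an orthonormal frame adapted to \eqref{metrictensor}. The constant $a$ will appear naturally as the angular speed of the rotating flow $\Psi^t$.

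First I would identify the generator. Since the isometries in question are rotations about the $z$-axis, the infinitesimal generator is $\xi = a\,\partial_v = a\,X_v$ for some constant $a$, and by \eqref{metrictensor} its squared length is $a^2\phi^2$. Differentiating \eqref{5} in $t$ and using $\partial_t\Psi^t = \xi\circ\Psi^t$ gives $\partial_t\widehat{\Phi}^t(s) = \xi(\widehat{\Phi}^t(s))$. Because each $\Psi^t$ is an isometry, it preserves geodesic curvature and sends unit normals to unit normals, so $\kappa^t(s)=\kappa(s)$ and $\eta^t(s)=d\Psi^t(\eta(s))$; moreover a Killing field is invariant along its own flow, $\xi(\Psi^t(p))=d\Psi^t\,\xi(p)$. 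Substituting all of this into \eqref{csf} and using that $d\Psi^t$ preserves the metric, the entire $t$-dependence cancels, so that \eqref{csf} holds for every $t$ if and only if the static condition
\begin{equation*}
\kappa(s)=\langle \xi(\Phi(s)),\,\eta(s)\rangle
\end{equation*}
holds at $t=0$. Hence $\Phi$ is a rotational soliton exactly when this identity is satisfied.

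Next I would compute the right-hand side. Setting $e_1=X_u$ and $e_2=\phi^{-1}X_v$ produces a positively oriented orthonormal frame by \eqref{metrictensor}. Writing the arc-length tangent as $T=\Phi'=u'e_1+\phi v'\,e_2$, the unit-speed condition becomes $(u')^2+\phi^2(v')^2=1$, and the unit normal is $\eta=-\phi v'\,e_1+u'\,e_2$. Since $\xi=a\phi\,e_2$, the inner product is immediate,
\begin{equation*}
\langle \xi,\eta\rangle=\langle a\phi\,e_2,\,-\phi v'\,e_1+u'\,e_2\rangle=a\,\phi(s)\,u'(s),
\end{equation*}
where $\phi(s)=\phi(u(s))$, and the soliton condition is exactly \eqref{kgeoROT}. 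The converse is the same computation read backwards: if $\kappa=a\phi u'$ holds, then $\kappa=\langle\xi,\eta\rangle$ with $\xi=a\,\partial_v$ Killing, so $\widehat{\Phi}^t=\Psi^t\circ\Phi$ solves \eqref{csf}; and when $a=0$ the generator is trivial and $\kappa\equiv 0$, i.e.\ $\Phi$ is a geodesic.

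The main obstacle I anticipate is the reduction in the second paragraph: carefully verifying that the soliton equation is genuinely equivalent to the static relation $\kappa=\langle\xi,\eta\rangle$, which relies on the invariance of $\xi$ along $\Psi^t$ together with the isometric invariance of both $\kappa$ and $\eta$. Once this reduction is secured, the rest is routine linear algebra in the frame $\{e_1,e_2\}$. A secondary point to handle is the orientation convention for $\eta$, which fixes only the sign of $a$ and leaves the form of \eqref{kgeoROT} unchanged.
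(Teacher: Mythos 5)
Your proposal is correct and follows essentially the same route as the paper: both reduce the soliton condition to the pointwise identity $\kappa=\langle\partial_t\widehat{\Phi}^t|_{t=0},\eta\rangle$ with generator $a\,X_v=a\phi e_2$ and evaluate it against $\eta=-\phi v'e_1+u'e_2$ to get $\kappa=a\phi u'$. The only cosmetic difference is that you dispose of the converse by the invariance of the Killing field and of $\kappa,\eta$ under its own isometric flow, where the paper instead verifies the identity for all $t$ by an explicit computation of $\widehat{\eta}^t$ and $\partial_t\widehat{\Phi}^t$ in $\mathbb{R}^3$.
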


        This result is similar to the characterization provided in \cite{Hiuri1,Silva2021} and \cite{Silva2}. The following theorem describes the behavior at infinity of the ends of the rotational solitons in a surface of revolution. 
        
        \begin{theorem}\label{theoconverge}
        Let $\Phi:I \subseteq \mathbb{R} \rightarrow \mathbb{M}^{2} \subset \mathbb{R}^{3}$ be a curve parametrized by the arc length. If $\Phi(s)=X(u(s),\,v(s))$ is a revolution soliton to the CSF with bounded total geodesic curvature, $|\phi|<\infty$ and $\ |\dot{\phi}| <\infty$, then each end of the curve are asymptotic to the parallel geodesic.
        \end{theorem}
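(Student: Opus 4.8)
The plan is to reduce the soliton to a first-order ODE for the angle the curve makes with the meridians, and then extract the asymptotic behaviour from two quantities built from that angle. Write the unit-speed condition $u'^2+\phi^2 v'^2=1$ by setting $u'=\cos\theta$ and $\phi v'=\sin\theta$, so that $\theta=\theta(s)$ is the angle between $\Phi'$ and the meridian direction. A direct computation with the Christoffel symbols of \eqref{metrictensor} (equivalently, Liouville's formula, using that the meridians are geodesics and the parallels have geodesic curvature $\phi'/\phi$, where $\phi'=d\phi/du$) gives $\kappa=\theta'+\frac{\phi'}{\phi}\sin\theta$. Combining this with the characterization $\kappa=a\phi u'=a\phi\cos\theta$ from Theorem \ref{caracROT}, the soliton is governed by
\[ \theta'+\frac{\phi'}{\phi}\sin\theta=a\phi\cos\theta . \]
If $a=0$ the curve is a geodesic, so from now on assume $a\neq 0$; the two ends $s\to\pm\infty$ are treated identically.

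The key observation is a monotonicity. Differentiating and substituting the ODE, one finds
\[ \frac{d}{ds}\big(\phi\sin\theta\big)=a\,\phi^2\cos^2\theta , \]
whose sign is constant (equal to that of $a$). Since $|\phi|<\infty$ and $|\sin\theta|\le 1$, the function $\phi\sin\theta$ is bounded, and being monotone it converges to a finite limit $L$ at the end. Next I would show that the geodesic curvature itself decays. Writing $\phi\cos\theta=\kappa/a$ and differentiating gives
\[ \frac{d}{ds}\big(\phi\cos\theta\big)=\phi'-a\,\phi^2\sin\theta\cos\theta , \]
which is bounded because $|\phi|<\infty$ and $|\dot\phi|<\infty$. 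The hypothesis of bounded total geodesic curvature means $\kappa=a\phi\cos\theta\in L^1(I)$; an $L^1$ function with bounded derivative must tend to $0$, so $\phi\cos\theta\to 0$, i.e. $\kappa\to0$, at the end.

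These two limits combine cleanly: since $\phi^2=(\phi\cos\theta)^2+(\phi\sin\theta)^2$, we obtain $\phi\to|L|$ along the end. On the main branch $L\neq 0$ the radius $\phi$ is therefore bounded away from $0$ near the end, which forces $\cos\theta\to0$ and hence $\theta\to\pm\pi/2$; moreover $\int\phi|\cos\theta|\,ds<\infty$ together with $\phi\ge|L|/2$ gives $\int|u'|\,ds=\int|\cos\theta|\,ds<\infty$, so $u(s)$ has finite total variation and converges to some $u_0$ with $\phi(u_0)=|L|$. Finally, evaluating the soliton ODE in the limit, the right-hand side $a\phi\cos\theta\to0$ while $\frac{\phi'}{\phi}\sin\theta\to\pm\phi'(u_0)/|L|$; since $\theta$ converges, $\theta'$ cannot tend to a nonzero constant, forcing $\phi'(u_0)=0$. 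Thus $u=u_0$ is precisely a parallel geodesic, and the end of $\Phi$ spirals onto it, which is the assertion.

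The step I expect to be the main obstacle is ruling out, or interpreting, the degenerate case $L=0$: there $\phi\to0$ and the end runs into the rotation axis rather than onto a parallel, so one needs either the hypotheses or a separate argument to show this cannot occur for a genuine open soliton end (or to regard the pole as a limiting degenerate parallel). A secondary technical point is making the passage from $u'\to0$ and $\phi\to|L|$ to the honest convergence $u\to u_0$ rigorous; the finite-total-variation argument above is what I would use, and it is exactly the place where boundedness of $\phi$ away from zero (hence $L\neq0$) is indispensable.
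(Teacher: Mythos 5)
Your reduction to the first-order equation $\theta'+\frac{\dot{\phi}}{\phi}\sin\theta=a\phi\cos\theta$ via Liouville's formula is correct (it is exactly \eqref{curvdarbousROT} rewritten with $u'=\cos\theta$, $\phi v'=\sin\theta$), and the route you take from there is genuinely different from the paper's. The paper works with $\Lambda=u'/\sqrt{1-(u')^2}$, bounds $|u''|\le|a|\phi+|\dot{\phi}|/\phi$, and applies Barbalat's lemma to $f(s)=\int_{s_0}^{s}\kappa^2$ (using that $f''=2\kappa\kappa'$ is bounded) to conclude $\kappa^2\to0$, hence $u'\to0$; it stops there, reading this as the ends becoming perpendicular to the meridians. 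Your argument has the same engine --- integrability of $\kappa$ plus boundedness of $\kappa'$ forces $\kappa\to0$, and your identity $\frac{d}{ds}(\phi\cos\theta)=\dot{\phi}-a\phi^2\sin\theta\cos\theta$ is precisely the boundedness of $\kappa'/a$ --- but the monotone quantity $\phi\sin\theta$ with $\frac{d}{ds}(\phi\sin\theta)=a\phi^2\cos^2\theta$ (a soliton analogue of Clairaut's relation) does not appear in the paper and buys you more: convergence of $\phi$ along the end, a fixed sign for $\sin\theta$, and the final step $\dot{\phi}(u_0)=0$ showing that the limiting parallel really is a geodesic. The paper never verifies that last point (nor that $u$ converges), so on this front your proof is more complete than the published one. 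You are also right to flag the degenerate case $L=0$: the paper has the same unaddressed degeneracy, hidden where $\phi u'\to0$ is upgraded to $u'\to0$.

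The one concrete gap is your reading of the hypothesis: the paper's Remark defines ``bounded total geodesic curvature'' as $\int_I\kappa^2<\infty$, i.e.\ $\kappa\in L^2$, not $L^1$. The decay step survives, since an $L^2$ function with bounded derivative also tends to $0$ (same interval argument, or Barbalat applied to $\int\kappa^2$ as in the paper). But your finite-total-variation step does not: $\kappa\in L^2$ with $\phi$ bounded below only gives $u'\in L^2$, which does not imply $\int|u'|\,ds<\infty$ or that $u$ converges. To rescue $u\to u_0$ under the stated hypothesis you need a different argument (for instance $u'\to0$ together with $\phi(u(s))\to|L|$ and discreteness of $\phi^{-1}(|L|)$, or a monotonicity of $u$ near the end extracted from the sign structure of the ODE). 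If you only aim to prove what the paper actually establishes ($\kappa\to0$ and $u'\to0$), the $L^2$ hypothesis suffices and your argument closes as written.
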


        \begin{remark}
            By bounded total geodesic curvature we are referring to $$\|\kappa(s)\|^2=\int_I\kappa(s)^2<\infty,$$
            where $\kappa(s)$ is the geodesic curvature of $\Phi(s).$
        \end{remark}   
        
        The next result is a consequence of Theorem \ref{theoconverge}. 
        
        \begin{corollary}\label{teoexistence}
           Let $\Phi$ be the revolution soliton of the CSF on $\mathbb{M}^2$, with $\Phi$ integrable. If $\Phi$ is a simple closed curve, then  $\Phi$ is a parallel geodesic.
        \end{corollary}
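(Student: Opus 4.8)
The plan is to extract from the soliton equation \eqref{kgeoROT} a Clairaut-type quantity whose monotonicity along the curve is incompatible with $\Phi$ closing up, unless $\Phi$ is a parallel. Write the unit tangent of $\Phi$ as $u'=\cos\theta$ and $\phi v'=\sin\theta$, where $\theta(s)$ is the angle between $\Phi'$ and the meridian direction $\partial_u$, and recall that Liouville's formula on the surface of revolution with metric \eqref{metrictensor} gives the geodesic curvature $\kappa=\theta'+\frac{\dot\phi}{\phi}\sin\theta$, with $\dot\phi=d\phi/du$. First I would substitute the characterization $\kappa=a\phi u'=a\phi\cos\theta$ of Theorem \ref{caracROT} into this expression and differentiate the Clairaut function $c(s):=\phi^{2}v'=\phi\sin\theta$. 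The $\theta'$-terms cancel and one is left with the clean identity
\begin{equation*}
\frac{d}{ds}\bigl(\phi^{2}v'\bigr)=a\,(\phi u')^{2}.
\end{equation*}
Thus $c$ is monotone along $\Phi$ whenever $a\neq 0$, reducing to the classical first integral $\phi^{2}v'$ precisely in the geodesic case $a=0$.

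Next I would use that $\Phi$ is a simple closed curve. Parametrized by arc length it is periodic in $s$ with some period $L$, so $u$, $v'$, and hence $c=\phi^{2}v'$ return to their initial values; integrating the identity above over one period yields
\begin{equation*}
0=c(L)-c(0)=a\int_{0}^{L}(\phi u')^{2}\,ds.
\end{equation*}
Since $\phi>0$, the integrand is nonnegative and vanishes identically only when $u'\equiv 0$. Hence, for a genuine soliton ($a\neq 0$) we must have $u'\equiv 0$, that is, $u\equiv u_{0}$ is constant and $\Phi$ is the parallel $v\mapsto X(u_{0},v)$. Finally, the geodesic curvature of this parallel is $\dot\phi(u_{0})/\phi(u_{0})$, while \eqref{kgeoROT} forces $\kappa=a\phi u'\equiv 0$; therefore $\dot\phi(u_{0})=0$ and $\Phi$ is a parallel geodesic, as claimed.

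The main obstacle is the borderline case $a=0$, in which the monotonicity degenerates: here $\Phi$ is merely a geodesic, and Clairaut's relation alone does not single out the parallels (for instance, tilted closed geodesics on the round sphere are simple, closed, and not parallels). Ruling these out is exactly where the embeddedness hypothesis and the integrability assumption on $\Phi$ must enter, and it is the step I expect to require the most care; morally it is the closed-curve counterpart of Theorem \ref{theoconverge}, whose proof rests on the same monotone quantity $\phi^{2}v'$ controlling the angle $\theta$ and forcing $u'\to 0$ at the ends. I would therefore arrange the final argument so that the conclusion $u'\equiv 0$ is obtained uniformly: the nondegenerate case $a\neq 0$ handled by the integral identity above, and the degenerate case $a=0$ by combining simplicity with the asymptotic analysis behind Theorem \ref{theoconverge}.
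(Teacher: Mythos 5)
Your argument is correct in the essential case $a\neq 0$, but it takes a genuinely different route from the paper's. The paper proves the corollary by combining Gauss--Bonnet with the first variation of area: because the soliton moves by isometries the enclosed area is constant, hence $\int_{\partial R}\kappa\,ds=0$; substituting $\kappa=a\phi u'$ and writing the integrand as the exact derivative of an antiderivative $h$ with $h'=\phi$, the paper then differentiates $0=ah(u)+c$ to get $a\phi u'\equiv 0$ and concludes $u'\equiv 0$. Your proof instead integrates the Clairaut-type identity $\frac{d}{ds}\bigl(\phi^{2}v'\bigr)=a(\phi u')^{2}$ over one period; this identity is indeed correct (it drops out immediately from the second equation of the system \eqref{sistemaROT}, and $\phi^{2}v'$ is the reciprocal of the quantity $v'/(1-(u')^{2})$ that the paper differentiates in the proof of Theorem \ref{maintheorem1}). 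Periodicity kills the left-hand side and the sign-definite integrand forces $u'\equiv 0$ when $a\neq 0$. This buys real rigor: the paper's passage from the single numerical fact $\int_{\partial R}\kappa\,ds=0$ to the pointwise statement $a\phi u'(s)=0$ is delicate, since $\int_{\partial R}\phi u'\,ds=h(u(L))-h(u(0))=0$ holds automatically for \emph{any} closed curve and so carries no information by itself; your sign-definite integrand is exactly what makes the conclusion legitimate. Two further remarks. First, your observation that the resulting parallel $u\equiv u_{0}$ must additionally satisfy $\dot\phi(u_{0})=0$ (so that it is genuinely a geodesic, since a parallel has geodesic curvature $\pm\dot\phi/\phi$) is a step the paper glosses over but which follows at once from $\kappa=a\phi u'\equiv 0$. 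Second, the degenerate case $a=0$ that you flag is a real caveat, but it is equally unaddressed by the paper (whose own final step tacitly divides by $a$); in that case the soliton is a trivial geodesic solution, which the paper excludes from consideration elsewhere, so you need not feel obliged to resolve it beyond noting it.
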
  

        In Section 4, we will apply our results in the torus $\mathbb{T}^2$. 
        We emphasize that this case is interesting because the torus does not have a defined curvature sign and is a surface of revolution given by $X(u,\,v)=(\phi(u)\cos(v),\,\phi(u)\sin(v),\,\psi(u))$, where the generating curve $\alpha(v)=(\phi(u),\,0,\,\psi(u))$ satisfies the condition $|\phi|, |\dot{\phi}| < \infty$. Furthermore, the only isometry is the rotations around the $z-$axis.  

        In Section 5, we will show that catenoids do not satisfy the condition $|\phi|, |\dot{\phi}| < \infty$. Therefore, Theorem \ref{theoconverge} does not apply to the catenoid. This means that we do not necessarily have solutions asymptotic to the central equator of the catenoid. This fact highlights the importance of the hypothesis over the generating curve for the revolution surface.

    \section{Background}\label{back}

         Throughout this work, we will always consider a revolution surface $\mathbb{M}^2$ within (\ref{ROT}). 
        In $\mathbb{M}^{2}$, let's consider the following orthonormal frame:    

         \begin{eqnarray}\label{refROT}
            e_1 &=&(\dot{\phi}\cos(v),\,\dot{\phi}\sin(v),\,\dot{\psi});\nonumber\\
            e_2 &=& (-\sin(v),\ \cos(v), \ 0);\\
            N(u,v) &=& (-\dot{\psi}\cos v,\,-\dot{\phi}\sin v,\, \dot{\phi}).\nonumber
        \end{eqnarray}

        To simplify the notation, we will use

        \begin{align*}
        \dot{\phi}=\dfrac{d \phi}{du}, && \dot{\psi}=\dfrac{d \psi}{du}, && \ddot{\phi}=\dfrac{d^2 \phi}{du^2}, && \ddot{\psi}=\dfrac{d^2 \psi}{du^2}.
        \end{align*}
        Moreover, the derivative with respect to the parameter $s$ or $t$ will be denoted by $'$. For instance,
        \begin{align*}
        u'(s) = \dfrac{d}{ds}u(s)\quad\mbox{and}\quad \xi'(t)=\dfrac{d}{dt}\xi(t).
        \end{align*}

        In terms of the frame \ref{refROT}, the coefficients of the first and second fundamental forms are given by 

        \begin{align}\label{dX}
        X_{u}= e_1, && X_{v}= \phi e_2, \nonumber\\
        \\ 
        X_{uu}=-(\ddot{\phi}\dot{\psi}-\ddot{\psi}\dot{\phi})N,&& X_{uv}=\dot{\phi} e_{2} && X_{vv}=-\phi\dot{\phi}e_1+\phi\dot{\psi}N,\nonumber.
        \end{align}
        
        Considering $\mathbb{M}^2$ the ambient space, let $\Phi(s)=X(u(s),\,v(s))$ a curve parameterized by length of the arc, then

        \begin{eqnarray}\label{TROT}
         T=u' X_u +v' X_v=u'e_{1}+\phi v'e_{2}
        \end{eqnarray}
        
        and
        \begin{eqnarray}\label{etaROT}
         \eta=N \wedge T= -v' \phi e_{1}+ u' e_{2},
        \end{eqnarray}
        where $T=\Phi'$ is the unit tangent vector field  and $\eta$ is the normal tangent vector field $\Phi$ with respect to $\mathbb{M}^2$.  
        
        The vector fields $\{ T, \ \eta, \ N \}$ form the Darboux frame such that 
        \begin{eqnarray}\label{darbouxROT}
        T'=\kappa\eta+\kappa_{n}N,\quad\eta'=-\kappa T+\tau N\quad\mbox{e}\quad N'=-\kappa_{n}T-\tau\eta,
        \end{eqnarray}
        were $\kappa$, $\kappa_{n}$, and $\tau$ are the geodesic curvature, the normal curvature, and the geodesic torsion of the curve $\Phi$, respectively.

        Taking the derivative and using (\ref{refROT}), we have
        \begin{eqnarray*}
        T'&=& u''X_{u}+ v''X_{v}+2u'v'X_{uv}+(u')^{2}X_{uu}+(v')^{2}X_{vv}\nonumber\\
        &=&[ u''-(v')^2\phi\dot{\phi}]e_{1} + [v''\phi+2u'v'\dot{\phi}]e_{2}+[-(u')^2(\ddot{\phi}\dot{\psi}-\dot{\phi}\ddot{\psi})+(v')^{2}\phi\dot{\psi}]N.\nonumber
        \end{eqnarray*}
        From the equation \eqref{darbouxROT}
    
        \begin{eqnarray}\label{curvdarbousROT}
        \kappa = \langle T',\eta\rangle=(- u'' v'+v''u')\phi+v'(1+(u')^{2})\dot{\phi},
        \end{eqnarray}
        where we used that the curve is parameterized by length arc, i.e.,
        \begin{eqnarray}\label{normTROT}
         (u')^{2}+(v')^{2}\phi^2=1.
        \end{eqnarray}
        Taking the derivative of \eqref{normTROT}, we have

        \begin{eqnarray}\label{DnormTROT}
         u'u''+v'v''\phi^2+u'(v')^2\phi\dot{\phi}=0.
        \end{eqnarray}

        A regular curve parametrized by arc length $\Phi(s)=X(u(s),v(s))$ on $\mathbb{M}^2$ is a rotation soliton to curve shortening flow if there is a one-parameter family of rotations around (isometries) the axis of revolution ($z$-axis), i.e.,
        \begin{equation}\label{Rot}
        R(t)=\left ( \begin{array}{ccc}
         \cos\xi(t) & -\sin\xi(t) & 0\\
        \sin\xi(t) & \cos\xi(t) & 0\\
         0 & 0 & 1\\
         \end{array} \right ),
        \end{equation}
        where $\xi$ is a smooth function such that $\xi(0) = 0$. Then,
         \begin{eqnarray}\label{rotsol}
         \widehat{\Phi}^{t}(s)=R(t)\Phi(s)
        \end{eqnarray}
        is a solution to curve shortening flow, i.e., satisfies Equation \eqref{csf}. The condition $\xi(0)=0$ ensures that $\phi(s)$ is the initial condition of the CSF.
    \section{Rotational Solitons for the CSF on Revolution Surfaces \\ Main Result}\label{main}
    This section is reserved for the demonstration of the main results of this manuscript.
     \begin{proof}[{\bf Proof of Theorem \ref{caracROT}}]
    Suppose that $\Phi(s)$ is parametrized by arc length and a rotational soliton solution to the CSF in $\mathbb{M}^{2}$. Hence, $$\widehat{\Phi}^t(s)= R(t)\Phi(s)$$ is a solution to the CSF, where $R(t)$ is given by \eqref{Rot}.
    
    Taking the derivative in the above equation at $t=0$, we have 
        	\begin{eqnarray*}
        	  {\displaystyle \partial_t \widehat{\Phi}^{0}(s)=R'(0)\Phi(s)=\xi'(0) \phi e_{2}.}
        	\end{eqnarray*}
            Making $a=\xi'(0)$, by the above identity and \eqref{etaROT}, from \eqref{csf} we get \eqref{kgeoROT}.
        	
        	Conversely, let $\Phi(s)=X(u(s),v(s))$ be a curve in $\mathbb{M}^{2}$ parametrized by arc length $s$ such that 
            $$\kappa(s)=a\phi(s) u'(s).$$

            Define $\widehat{\Phi}^t(s)=R(t)\Phi(s)$, where
            $$R(t)=\left ( \begin{array}{ccc}
            \cos at & -\sin at & 0\\
            \sin at & \cos at & 0\\
            0 & 0 & 1\\
            \end{array} \right ).$$

            We will show that $\widehat{\Phi}^t(s)$ is a solution to the CSF on $\mathbb{M}^2$. To that end, since $R(t)$ is an isometry on $\mathbb{M}^{2}$, we have that $\widehat{\kappa}^{t}(s)=\kappa(s)$, $\widehat{\eta}^{t}(s) = R(t) \eta(s)$  for all $t\in I$. Thus,
            $$\widehat{\eta}^{t}(s)= \left( -v'\phi\dot{\phi}\cos(at+v) -u'\sin(at+v), -v'\phi\dot{\phi}\sin(at+v)+u'\cos(at+v), -v'\phi\dot{\psi} \right).$$
            Taking the derivative of  $\widehat{\Phi}^t(s) = R(t)\Phi(s)$ with respect to $t$, we have
              
              $$\displaystyle \partial_t \widehat{\Phi}^{t}(s)=\partial_t R(t)\Phi(s)=a \phi\left( -\sin(at+v), \cos (at+v), 0 \right).$$
            Therefore,
            \begin{eqnarray*}
                \left\langle\displaystyle \partial_t \widehat{\Phi}^{t},\widehat{\eta}^{t}\right\rangle 
                                = a\phi u' = \widehat{\kappa}^{t}.
            \end{eqnarray*}     
      \end{proof}   

We will derive an ODE system for the rotational soliton of the CSF on a revolution surface. Using this system, we prove our main result.

      \begin{proposition}\label{teoEDO}
        	Let $\Phi:I \subseteq \mathbb{R}\rightarrow \mathbb{M}^{2}\subset\mathbb{R}^{3}$ be a curve parametrized by the arc length on the revolution surface. Then, $\Phi(s)=X(u(s),\,v(s))$ is a soliton for the CSF on the revolution surface if and only if
             \begin{eqnarray}
                \begin{cases}\label{sistemaROT}
                    u''=-au'v'\phi^2+ (v')^2\phi\dot{\phi};\\ \\
                    v''=a(u')^2-\dfrac{2u' v'\dot{\phi}}{\phi}.\\
                \end{cases}
            \end{eqnarray}   
            Moreover, if $a=0$ we obtain the geodesic ODE system for $\mathbb{M}^2$.
        \end{proposition}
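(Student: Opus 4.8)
The plan is to treat the soliton characterization of Theorem \ref{caracROT} together with the constraint coming from the arc-length parametrization as a \emph{linear system} in the second derivatives $u''$ and $v''$, and then invert it. First I would substitute the Darboux expression for the geodesic curvature \eqref{curvdarbousROT} into the relation $\kappa = a\phi u'$ of \eqref{kgeoROT}. This yields
\[
(-u''v' + v''u')\phi + v'(1+(u')^2)\dot{\phi} = a\phi u',
\]
which, regarded as an equation in the unknowns $u''$ and $v''$, reads $-v'\phi\,u'' + u'\phi\,v'' = a\phi u' - v'(1+(u')^2)\dot{\phi}$. The second equation is supplied by \eqref{DnormTROT}, namely $u'\,u'' + v'\phi^2\,v'' = -u'(v')^2\phi\dot{\phi}$, which holds automatically because $\Phi$ is unit-speed. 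Together these form a $2\times 2$ linear system whose coefficient matrix has rows $(-v'\phi,\,u'\phi)$ and $(u',\,v'\phi^2)$.

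Next I would compute the determinant of this matrix, which is $-(v')^2\phi^3 - (u')^2\phi = -\phi\bigl[(v')^2\phi^2 + (u')^2\bigr] = -\phi$, where the last step uses the unit-speed identity \eqref{normTROT}; since $\phi>0$ the system is invertible. Solving by Cramer's rule and simplifying produces at once the first equation of \eqref{sistemaROT}. For the second equation, the raw expression for $v''$ carries the factor $(v')^2\phi^2 + 1 + (u')^2$; recognizing that \eqref{normTROT} collapses this bracket to $2$ is the one non-mechanical step, and it is exactly what reduces the formula to the clean form $v'' = a(u')^2 - 2u'v'\dot{\phi}/\phi$. Both directions of the equivalence then follow simultaneously: since \eqref{DnormTROT} holds for any unit-speed curve and the linear map is invertible, the system \eqref{sistemaROT} is equivalent to the single relation $\kappa = a\phi u'$, which by Theorem \ref{caracROT} is equivalent to $\Phi$ being a soliton.

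Finally, setting $a=0$ and comparing with the Christoffel symbols of the metric \eqref{metrictensor} — the only nonzero ones being $\Gamma^{u}_{vv} = -\phi\dot{\phi}$ and $\Gamma^{v}_{uv} = \dot{\phi}/\phi$ — shows that \eqref{sistemaROT} becomes precisely the geodesic equations $u'' = \phi\dot{\phi}(v')^2$ and $v'' = -2(\dot{\phi}/\phi)u'v'$. I expect the main obstacle to be purely bookkeeping: keeping the Cramer's-rule computation organized and spotting the two places where \eqref{normTROT} must be invoked (once to reduce the determinant to $-\phi$, once to collapse the bracket to $2$) in order to obtain the stated simplifications.
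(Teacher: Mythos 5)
Your proposal is correct and takes essentially the same route as the paper: both treat the soliton relation $\kappa = a\phi u'$ (via \eqref{curvdarbousROT}) together with the differentiated unit-speed identity \eqref{DnormTROT} as a $2\times 2$ linear system in $u''$ and $v''$ and solve it, the paper by elimination (multiplying the two equations by suitable factors and adding) and you by Cramer's rule, with \eqref{normTROT} invoked at the same two simplification points. Your explicit remark that invertibility of the system (determinant $-\phi\neq 0$) gives the equivalence in both directions is a slight tidying of the paper's one-directional presentation, but not a different argument.
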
   

        \begin{proof}[Proof of Proposition \ref{teoEDO}]
        Combining Theorem \ref{caracROT} with \eqref{curvdarbousROT} we obtain
        $$( - u'' v'+u'v'')\phi+ v'(1+(u')^{2})\dot{\phi}=au'\phi.$$
        So,
        \begin{eqnarray}\label{curvaturaAUX}
        - u'' v'+u'v'' =au' - v'(1+(u')^{2})\frac{\dot{\phi}}{\phi}.
        \end{eqnarray}
        Then, multiplying \eqref{DnormTROT} by $u'$ and \eqref{curvaturaAUX} by $ v'\phi^2$ yields to
        $$(u')^2 u''=-(u')^2(v')^2\phi\dot{\phi}-u'v'v''\phi^2$$     
        and    
        $$(v')^2\phi^2 u''=-au'v'\phi^2 + (v')^2(1+(u')^{2})\phi\dot{\phi}+ u'v'v''\phi^2 .$$
        Combining the last two equations and using equation \eqref{normTROT}, we obtain
        $$u''=-au'v'\phi^2+ (v')^2\phi\dot{\phi}.$$

        Now, we multiply \eqref{DnormTROT} by $v'$ and the equation \eqref{curvaturaAUX} by $u'$ 
        to obtain, respectively,
        $$u'v'u''+(v')^2v''\phi^2=-u'(v')^3\phi\dot{\phi}$$
        and
        $$- u'' v'u'+(u')^2v'' = a(u')^2 - u'v'(1+(u')^{2})\frac{\dot{\phi}}{\phi}.$$
        From the last two equations, and again using \eqref{normTROT} we have
        $$v''=a(u')^2-\dfrac{2u' v'\dot{\phi}}{\phi}.$$
        \end{proof}

        To prove our next result, let us define the following function

        \begin{eqnarray}\label{definicaoLROT}
            \Lambda(s)= \dfrac{u'(s)}{\sqrt{1-(u'(s))^2}}.\nonumber
        \end{eqnarray}
        
         Since $\Phi(s)$ is a curve parametrized by the arc length, $\Lambda$ is well-defined since $1-(u')^{2}=(v')^{2}\phi^{2}\geq 0$; see \eqref{normTROT}. Moreover, $1-(u')^{2}=0$ if, and only if $v'=0$ or is an extreme point of the generating curve. In the following, we will assume that the set of parameters for which $1-(u')^2 = 0$ is not dense. In an open set where $1-(u')^2 = 0$, the curve $\phi$ is a geodesic. However, geodesics are trivial solutions to the CSF, and we are not interested in this case. Under this hypothesis, we can consider that $1-(u')^{2}>0$. In this case, $u'$ is a limited function with $|u'|<1$.

         \begin{theorem}\label{maintheorem1}
	        Let $\Phi:I\subseteq\mathbb{R}\rightarrow\mathbb{M}^{2}\subset\mathbb{R}^{3}$ be a curve parametrized by the arc length on the revolution surface. If $\Phi(s)=X(u(s),\,v(s))$ is an rotational soliton for the CSF in $\mathbb{M}^2$, then
            \begin{eqnarray}\label{normaL2}
                \kappa(s) =\pm \Lambda(s)\left(c + \int_{I}\kappa(s)^2\right);\quad c\in\mathbb{R},
            \end{eqnarray}
            where $\kappa$ stands for the geodesic curvature of $\Phi$.
        \end{theorem}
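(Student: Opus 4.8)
The plan is to start from the characterization $\kappa = a\phi u'$ furnished by Theorem \ref{caracROT} and to track the Clairaut-type ``angular momentum'' quantity $\phi^{2}v'$, whose evolution along the curve is dictated by the soliton ODE system of Proposition \ref{teoEDO}. The motivating observation is that when $a=0$ the curve is a geodesic and $\phi^{2}v'$ is conserved (Clairaut's relation), so for a genuine soliton its arc-length derivative ought to be a controlled correction, and the natural guess is that this correction is proportional to $\kappa^{2}$. First I would differentiate $\phi^{2}v'$ in $s$, using $\tfrac{d}{ds}\phi=\dot{\phi}\,u'$ together with the second equation of \eqref{sistemaROT}.

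A direct computation gives
\begin{eqnarray*}
\frac{d}{ds}\left(\phi^{2}v'\right)=2\phi\dot{\phi}\,u'v'+\phi^{2}v''=2\phi\dot{\phi}\,u'v'+\phi^{2}\!\left(a(u')^{2}-\frac{2u'v'\dot{\phi}}{\phi}\right)=a\phi^{2}(u')^{2},
\end{eqnarray*}
so that the cross terms cancel and only the $a$-term survives. I would then rewrite the right-hand side in terms of $\kappa$: since $\kappa=a\phi u'$ we have $\kappa^{2}=a^{2}\phi^{2}(u')^{2}$, whence $a\phi^{2}(u')^{2}=\kappa^{2}/a$ and therefore $\tfrac{d}{ds}\!\left(a\phi^{2}v'\right)=\kappa^{2}$. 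Integrating in arc length yields
\begin{eqnarray*}
a\phi^{2}v'=c+\int\kappa(s)^{2}\,ds,
\end{eqnarray*}
where $c$ records the value at a fixed base point and the integral is the running (indefinite) integral that, up to the additive constant, matches the right-hand side of \eqref{normaL2}.

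Finally I would connect $a\phi^{2}v'$ to the function $\Lambda$ of \eqref{definicaoLROT}. From the arc-length relation \eqref{normTROT} one gets $\sqrt{1-(u')^{2}}=|v'|\phi$, so $\Lambda=u'/(|v'|\phi)$, and combining with $\kappa=a\phi u'$ gives $\kappa/\Lambda=a|v'|\phi^{2}=\pm\,a v'\phi^{2}$, with the sign being that of $v'$. Substituting the integrated identity then produces exactly
\begin{eqnarray*}
\kappa(s)=\pm\,\Lambda(s)\left(c+\int\kappa(s)^{2}\right),
\end{eqnarray*}
which is the desired conclusion.

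The main obstacle I anticipate is not the differentiation, which is routine once the Clairaut quantity $\phi^{2}v'$ has been identified, but the careful bookkeeping of signs. The $\pm$ in \eqref{normaL2} originates precisely in the branch choice for $\sqrt{1-(u')^{2}}=|v'|\phi$, i.e. in the sign of $v'$, and one must check that this sign is consistent on each component of the domain where $1-(u')^{2}>0$, the regime singled out after \eqref{definicaoLROT} away from geodesic pieces. One should also use $\phi>0$ correctly when extracting the square root, and treat the degenerate case $a=0$ separately, where the identity collapses to the conserved Clairaut relation $\phi^{2}v'=\text{const}$.
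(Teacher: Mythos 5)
Your proof is correct, and it reaches the paper's identity by a genuinely different (and arguably cleaner) route. The paper never introduces the Clairaut momentum $\phi^{2}v'=\langle T,X_{v}\rangle$; instead it recombines \eqref{normTROT}, \eqref{DnormTROT} and the soliton system to obtain $\frac{d}{ds}\bigl(\frac{v'}{1-(u')^{2}}\bigr)=\frac{-a(u')^{2}}{1-(u')^{2}}$, rewrites $\frac{v'}{1-(u')^{2}}$ as $\mp\frac{a\Lambda}{\kappa}$ using $\kappa=a\phi u'$, arrives at $\pm\frac{d}{ds}\left(\Lambda/\kappa\right)=\Lambda^{2}$, and only then integrates after the extra manipulation $\kappa^{2}=\mp\frac{d}{ds}\left[(\Lambda/\kappa)^{-1}\right]$. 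Since $\frac{v'}{1-(u')^{2}}=\frac{1}{\phi^{2}v'}$, the paper's quantity is exactly the reciprocal of yours, so the two arguments rest on the same underlying conservation law; but your version integrates in one step from $\frac{d}{ds}(a\phi^{2}v')=\kappa^{2}$ and makes the geometric meaning transparent (a perturbed Clairaut relation), whereas the paper's requires the additional division by $(\Lambda/\kappa)^{2}$. Your sign bookkeeping via $\sqrt{1-(u')^{2}}=|v'|\phi$ is at the same level of care as the paper's running $\pm$. The only caveats --- division by $a$ and by $u'$, hence the implicit assumptions $a\neq 0$ and $u'\neq 0$ (the latter harmless, since both sides of \eqref{normaL2} vanish where $u'=0$) --- are equally present in the paper's own proof, and you flag the degenerate case $a=0$ explicitly, which the paper does not.
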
      

        \begin{proof}[Proof of Theorem \ref{maintheorem1}]
        Combining equations  \eqref{normTROT}  and \eqref{DnormTROT} we get
        \begin{equation*}
        v''=-\dfrac{\dot{\phi}}{\phi}u' v'- \dfrac{u'v'u''}{1-(u')^2}.
        \end{equation*}
        Then, from Proposition \ref{teoEDO} and the above equation, we obtain 
        $$v''=-a(u')^2- \dfrac{2u'v'u''}{1-(u')^2}.$$
        We can rewrite the above ODE as follows
         $$\dfrac{v''[1-(u')^2]-v'[1-(u')^2]'}{[1-(v')^2]^2}=\dfrac{-a(u')^2}{1-(u')^2},$$
        which is equivalent to
        \begin{equation}\label{DauxROT}
                \dfrac{d}{ds}\left(\dfrac{v'}{1-(u')^2}\right)=\dfrac{-a(u')^2}{1-(u')^2}.
        \end{equation}
        From \eqref{normTROT} we have
         $$v'=\dfrac{\pm\sqrt{1-(u')^{2}}}{\phi}.$$
         Now, Theorem \ref{caracROT} gives us
         $$\kappa v'=\mp a u' \sqrt{1-(u')^{2}},$$
        Thus,
        $$\dfrac{v'}{1-(u')^{2}}=\mp\dfrac{ a u'}{\kappa\sqrt{1-(u')^{2}}}.$$
        Furthermore, from \eqref{DauxROT} we get
        \begin{eqnarray*}
                \dfrac{d}{ds}\left(\mp\dfrac{ a u'}{\kappa\sqrt{1-(u')^{2}}}\right) &=& \dfrac{d}{ds}\left(\dfrac{v'}{1-(u')^{2}}\right)\\
                &=& \dfrac{-a(u')^2}{1-(u')^2}\\
                &=& -a\left(\dfrac{u'}{\sqrt{1-(u')^2}}\right)^2.
            \end{eqnarray*}
         Considering $\Lambda(s)=\dfrac{u'}{\sqrt{1-(u')^2}}$ the above identity becomes
        \begin{equation}\label{lambdaAUX}
                \pm \dfrac{d}{ds}\left(\dfrac{\Lambda}{\kappa}\right)=\Lambda^2.
        \end{equation}
        Therefore, 
        \begin{eqnarray*}
                 \Lambda^2 &=& \pm \dfrac{d}{ds}\left(\dfrac{\Lambda}{\kappa}\right);\\
                 \kappa^2 &=& \pm\dfrac{\dfrac{d}{ds}\left(\dfrac{\Lambda}{\kappa}\right)}{\left(\dfrac{\Lambda}{\kappa}\right)^2};\\
                 \kappa^2 &=& \mp\dfrac{d}{ds}\left[\left(\dfrac{\Lambda}{\kappa}\right)^{-1}\right].\\
            \end{eqnarray*}
         By integration we get
           \begin{eqnarray*}
                \kappa_{g} = \pm \Lambda\left(c + \int_{I}\kappa_{g}^2\right),
            \end{eqnarray*}
        where $c\in\mathbb{R}$.
       \end{proof}

       \begin{proof}[{\bf Proof of Theorem \ref{theoconverge}}]
       Taking the derivative of the geodesic equation in Theorem \ref{caracROT}, we have
        \begin{equation}\label{kappa'}
           \kappa'=a\left(\dot{\phi}(u')^2+\phi u''\right).
        \end{equation}
        
         Multiplying the previous equation by $\dfrac{\Lambda}{\kappa}$ and combining it with $$\Lambda'=\dfrac{u''}{[1-(u')^2]^{3/2}}$$ gives us
         \begin{eqnarray*}
               \ \frac{\Lambda}{\kappa}\kappa' &=& \dfrac{a u'}{\kappa\sqrt{1-(u')^2}}\left[\dot{\phi}(u')^2+\phi u''\right]\\
                                                      &=& \dfrac{1}{\sqrt{1-(u')^2}}\left[\dfrac{\dot{\phi}}{\phi}(u')^2+ u''\right]\\
                                                      &=& \dfrac{\dot{\phi}(u')^2}{\phi\sqrt{1-(u')^2}} + \dfrac{u''}{\sqrt{1-(u')^2}}\\
                                                      &=& \dfrac{\dot{\phi}(u')^2}{\phi\sqrt{1-(u')^2}} + \Lambda'[1-(u')^2].
            \end{eqnarray*}    
        From the equation \eqref{lambdaAUX} we can see that $$\kappa'\frac{\Lambda}{\kappa}=\Lambda'\pm \kappa{\Lambda}^2.$$
        Combining the last two identities we have
        \begin{eqnarray*}
               \Lambda'\pm \kappa{\Lambda}^2 &=& \dfrac{\dot{\phi}(u')^2}{\phi\sqrt{1-(u')^2}} + \Lambda'[1-(u')^2]\\
                                  \Lambda'(u')^2 &=& \mp \kappa{\Lambda}^2+\dfrac{\dot{\phi}(u')^2}{\phi\sqrt{1-(u')^2}}\\
                                        \Lambda' &=& \mp \dfrac{\kappa}{(u')^2}\left({\dfrac{u'}{\sqrt{1-(u')^2}}}\right)^2+\dfrac{\dot{\phi}}{\phi\sqrt{1-(u')^2}}\\
                                        \Lambda' &=& \mp {\dfrac{\kappa}{1-(u')^2}}+\dfrac{\dot{\phi}}{\phi\sqrt{1-(u')^2}}\\
                                        \Lambda' &=& \pm {\dfrac{a \phi u'}{1-(u')^2}}+\dfrac{\dot{\phi}}{\phi\sqrt{1-(u')^2}}\\
                   \dfrac{u''}{[1-(u')^2]^{3/2}} &=& \pm {\dfrac{a \phi u'}{1-(u')^2}}+\dfrac{\dot{\phi}}{\phi\sqrt{1-(u')^2}}\\
                   \dfrac{u''}{1-(u')^2} &=& \pm {\dfrac{a \phi u'}{\sqrt{1-(u')^2}}}+\dfrac{\dot{\phi}}{\phi}.
            \end{eqnarray*}    
         Therefore, 
         \begin{equation}\label{u''}
             |u''|\leq |a|\phi+\dfrac{|\dot{\phi}|}{\phi}.
         \end{equation}
          
       Define $$f(s)= \int_{s_0}^{s}\kappa_{g}^2.$$ 
       Note that $f' = \kappa^2 \geq 0$, so $f$ is non-decreasing. Since $f$ is finite, the limit $\displaystyle\lim_{s\rightarrow\infty}f(s)$ exists.
        On the other hand, $f''=2\kappa'\kappa.$ Then, a straightforward computation from \eqref{kgeoROT}, \eqref{kappa'} and \eqref{u''} proves that $|f''|$ is bounded, and thus $f'$ is uniformly continuous.
        
        Therefore, we can apply a Lyapunov-type lemma \cite[Lemma 4.3]{slotine} to get 
        \begin{equation}\label{Eqlim}
        \displaystyle\lim_{s\rightarrow\infty}f'(s)=\displaystyle\lim_{s\rightarrow\infty}\kappa^2=0.
        \end{equation}
        To show that the ends of $\Phi(s)=X(u(s),v(s))$ are asymptotic to geodesics, we define $\theta$ the angle between the $\Phi$ curve and the meridians of $\mathbb{M}^2$ by
         \begin{eqnarray*}
                \cos\theta= \frac{1}{|X_u|}\langle X_u,\, T\rangle.
        \end{eqnarray*}
        Use \eqref{dX} and \eqref{TROT} in the above equation to obtain
        \begin{eqnarray*}
                \cos\theta= u'.
            \end{eqnarray*}
       Theorem \ref{caracROT} combined with \eqref{Eqlim} leads us to $\lim_{s\rightarrow\infty}u'=0$. Then,
    \begin{eqnarray*}
                \lim_{s\rightarrow\infty}\cos\theta= \lim_{s\rightarrow\infty}u'=0.
    \end{eqnarray*}
    This demonstrates that the ends of $\Phi$ are perpendicular to the meridians. Thus, $\Phi$ must be asymptotic to the parallels which are geodesics. 
    \end{proof}

     \begin{proof}[{\bf Proof of Corollary \ref{teoexistence}}]
         Let $h(u)$ be a function such that $h'(u)=\phi(u)$.
            Suppose that the region $R$ bounded by $\Phi$ is simple.
         From the Gauss-Bonnet Theorem, we have
         $$\int_{\partial R}\kappa ds + \int\int_{R}Kd\mathrm{a} =2\pi.$$
        Moreover, from the formulae for the first variation of area $\mathrm{A}$, we know that
         $$\int_{\partial R}\kappa ds + \dfrac{d\mathrm{A}}{dt} = 0.$$
         Since isometries preserve the area, we must have
         $\mathrm{A}(t)$ constant, then $$\int_{\partial R}\kappa ds = 0.$$
         By the characterization of the geodesic curvature given by Theorem \ref{caracROT}, we conclude that
         \begin{eqnarray*}
                0=\int_{\partial R}\kappa  ds = a\int_{\partial R} \phi u'ds = ah(u) + c,
            \end{eqnarray*}
          with $c\in\mathbb{R}$. So,
           \begin{eqnarray*}
                0=ah(u) + c
           \end{eqnarray*}
          Taking the derivative we get
           \begin{eqnarray*}
                0=a\phi u'.
           \end{eqnarray*}
           Since $\phi>0$, we have $u'=0$, i.e., $\Phi$ is a parallel geodesic of $\mathbb{M}^2$.
     \end{proof}

    We highlight here the importance of the conditions $|\phi|, |\dot{\phi}| < \infty$ in Theorema \ref{theoconverge}. We can consider the plane as a surface of revolution of $\mathbb{R}^3$ where $\phi(u)=u$ and $\psi(u)=0$. In \cite{halldorsson2012}, Halldorsson provided examples of revolution solitons in $\mathbb{R}^2$ where the ends of such solutions are unbounded. Another example that proves the necessity of this hypothesis is the catenoid with generating curves $$(\phi(u),\,0,\,\psi(u))=(\cosh u,\,0,\,u)$$  which do not satisfies the condition $|\phi|, |\dot{\phi}| < \infty$. Therefore, Theorem \ref{theoconverge} can not be applied to the catenoid. This means we may not have solutions asymptotic to the catenoid's central equator (see numerical solution in Appendix \ref{appendix}). This fact emphasizes the importance of the conditions on the generating curve of a revolution surface.

    \section{Soliton for the CSF on Torus $\mathbb{T}^2$}\label{torus}
        This section is reserved to study soliton solutions for the CSF on $\mathbb{T}^2.$ Consider the torus $\mathbb{T}^{2}$ in $\mathbb{R}^{3}$ (cf. \cite{CARMO GD}) given by revolution surface \eqref{ROT}, where
        \begin{eqnarray}
         \phi(u) & = &  R+r\cos u \nonumber \\   
        \psi(u) & = & r\sin u,\ \text{where} \  0<r<R. 
        \end{eqnarray}

         Let $\zeta$ be a tangent vector field in $\mathbb{T}^{2}$ given by
        \begin{eqnarray*}
        	\zeta=\zeta^{1}(u,\,v)X_{u}+\zeta^{2}(u,\,v)X_{v},
        \end{eqnarray*}
        where $\zeta^{i}$ are smooth functions. Remember that
        \begin{eqnarray}\label{td1}
            X_{u}(u,\,v)=\left(-r\sin u\cos v,\,-r\sin u\sin v,\,r\cos u\right)
        \end{eqnarray}
        and
        \begin{eqnarray}\label{td2}
            X_{v}(u,\,v)=\left(-(R+r\cos u)\sin v,\,(R+r\cos u)\cos v,\,0\right).
        \end{eqnarray}
    
        Now, if $\zeta$ is a Killing vector field then
        \begin{eqnarray}\label{killingeq}
            \zeta^{k}g_{ij\,;k}+\zeta^{k}_{\,;i}g_{jk}+\zeta^{k}_{\,;j}g_{ik}=0.
        \end{eqnarray}
        Therefore, from \eqref{metrictensor} we obtain (cf. (4.6.12) in \cite{fecko}) that the only solution for \eqref{killingeq} is given by the generator of rotations around the $z$-axis, i.e., $\zeta^{1}=0$ and $\zeta^{2}=A$, where $A$ is constant.  
        
        Considering the torus $\mathbb{T}^2$ by $X(u,\,v)=\left(x(u,v),\,y(u,v),\,z(u,v)\right)$ and the canonical frame of $\mathbb{R}^3$ by $\{\partial_{1},\partial_{2},\partial_{3}\}$, we have
        \begin{eqnarray*}
        	\zeta=A(-y\partial_{1}+x\partial_{2}).
        \end{eqnarray*}
        Hence, if $\Psi^{t}$ is an $1$-parameter family of isometries of $\mathbb{T}^{2}$ then 
        \begin{eqnarray*}
        	\left\{
        	\begin{array}{lll}
        		\dfrac{d}{dt}\Psi^{t}(p)=\zeta(\Psi^{t}(p));\\\\
        		
        		\Psi^{0}(p)=p;\quad p\in\mathbb{T}^{2}.
        	\end{array}
        	\right.
        \end{eqnarray*}
        A straightforward computation gives us
        \begin{eqnarray*}
        	\Psi^{t}(x,\,y,\,z)=(x\cos t - y\sin t,\,x\sin t+ y\cos t ,\, z),
        \end{eqnarray*}
        where 
        \begin{eqnarray*}\label{param}
            \left(R - \sqrt{x^2 + y^2}\right)^{2} + z^2 = r^2.
        \end{eqnarray*}
    
        From now on, we will denote by $\Phi$ any curve parameterized by the arc length on $\mathbb{T}^{2}$. Writing $\Phi(s)=(x(s),\,y(s),\,z(s))$, we have
        \begin{equation}\label{csf in T2}
            \Phi^{t}(s)=\Psi^{t}(\Phi(s))=\left ( \begin{array}{ccc}
            \cos\xi(t) & -\sin\xi(t) & 0\\
            \sin\xi(t) & \cos\xi(t) & 0\\
            0 & 0 & 1\\
            \end{array} \right )\left ( \begin{array}{ccc}
            x(s) \\
            y(s) \\
            z(s) \\
            \end{array} \right ),
        \end{equation}
        where $\xi$ are smooth functions such that $\xi(0)=0$. Therefore, the isometries of $\mathbb{T}^{2}$ generated by Killing fields are given by \eqref{Rot}. This result shows that Theorem \ref{theoconverge} describes all soliton solutions of the CSF on the torus. Then, as a consequence of the Theorem \ref{theoconverge} we have the following result.

         \begin{theorem}
	       Let $\Phi: I\subseteq\mathbb{R}\rightarrow\mathbb{T}^{2}\subset\mathbb{R}^{3}$ be an arc-length parametrized curve. If $\Phi(s) = X(u(s), v(s))$ is a soliton of the CSF on $\mathbb{T}^2$ with bounded total geodesic curvature, then $\Phi$ is asymptotic to the equators.
        \end{theorem}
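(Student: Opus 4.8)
The plan is to derive this statement as a direct specialization of Theorem \ref{theoconverge}, so that the proof reduces to two things: checking that the torus satisfies the hypotheses of that theorem, and identifying which parallels of $\mathbb{T}^2$ are geodesics. First I would record the two boundedness conditions. Since the generating curve of $\mathbb{T}^2$ is $\alpha(u)=(R+r\cos u,\,0,\,r\sin u)$, we have $\phi(u)=R+r\cos u$ and $\dot{\phi}(u)=-r\sin u$. Hence $|\phi(u)|\le R+r<\infty$ and $|\dot{\phi}(u)|\le r<\infty$ for every $u$, so the requirements $|\phi|<\infty$ and $|\dot{\phi}|<\infty$ hold automatically on the torus, for the elementary reason that $\cos$ and $\sin$ are bounded.

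Next I would invoke the Killing-field computation carried out above (see \eqref{killingeq}), which shows that the only solution of the Killing equation on $\mathbb{T}^2$ generates rotations about the $z$-axis, so that every one-parameter family of isometries of $\mathbb{T}^2$ has the form \eqref{Rot}. Consequently any soliton of the CSF on $\mathbb{T}^2$ is necessarily a rotational soliton; thus the hypothesis of the present theorem already places $\Phi$ within the scope of Theorem \ref{theoconverge}, while the assumed bound on the total geodesic curvature supplies the remaining condition. Applying Theorem \ref{theoconverge} then yields that each end of $\Phi$ is asymptotic to a parallel geodesic of $\mathbb{T}^2$.

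To finish, I would identify these parallel geodesics concretely. For a parallel $\{u=\text{const}\}$ one has $u'=u''=0$ and, by \eqref{normTROT}, $v'=1/\phi$ with $v''=0$, so the curvature formula \eqref{curvdarbousROT} reduces to $\kappa=\dot{\phi}/\phi$; hence a parallel is a geodesic exactly when $\dot{\phi}=0$. On the torus this means $-r\sin u=0$, i.e.\ $u=0$ or $u=\pi$, which are precisely the outer equator ($\phi=R+r$) and the inner equator ($\phi=R-r$). Therefore ``parallel geodesic'' coincides with ``equator'' on $\mathbb{T}^2$, and the conclusion follows. I do not expect a substantial obstacle here: the whole argument is a specialization of the general theorem, and the only genuine content is the bookkeeping just described, namely verifying the two boundedness conditions, observing that all solitons on the torus are rotational, and pinning down the critical points of $\phi$ as the two equators.
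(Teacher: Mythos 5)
Your proposal is correct and follows essentially the same route as the paper, which likewise obtains this theorem as a direct consequence of Theorem \ref{theoconverge} after observing that the only Killing fields on $\mathbb{T}^2$ generate rotations about the $z$-axis (so every soliton is rotational) and that $\phi=R+r\cos u$, $\dot{\phi}=-r\sin u$ are bounded. Your explicit identification of the parallel geodesics as the two equators via $\dot{\phi}=0$ is a welcome detail the paper leaves implicit.
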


         We already know that if $\kappa=0$ we have geodesics in $\mathbb{T}^2$, trivial solutions for the CSF. Inspired by this, we prove a result of the nonexistence of soliton solutions with constant curvature $\kappa$ for the CSF on $\mathbb{T}^2.$
        \begin{corollary}\label{maintheorem2}
	        There is no soliton solution for the CSF on $\mathbb{T}^2$ with non-null constant geodesic curvature.
        \end{corollary}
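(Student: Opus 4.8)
The plan is to argue by contradiction, assuming a soliton $\Phi(s)=X(u(s),v(s))$ on $\mathbb{T}^2$ whose geodesic curvature $\kappa$ is a nonzero constant. By the characterization in Theorem \ref{caracROT}, $\kappa=a\phi u'$; since $\kappa\neq 0$ we must have $a\neq 0$, and the quantity $c:=\kappa/a=\phi u'$ is then a nonzero constant. First I would record the immediate consequences of $\phi u'=c$: differentiating gives $u''=-c^{2}\dot{\phi}/\phi^{3}$, while the arc-length relation \eqref{normTROT} gives $(u')^{2}=c^{2}/\phi^{2}$ and hence $(v')^{2}=(\phi^{2}-c^{2})/\phi^{4}$. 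In particular $u'=c/\phi$ never vanishes (as $\phi$ is finite and $c\neq 0$), so $u$ is strictly monotone and $\phi=R+r\cos u$ genuinely sweeps out a nondegenerate interval of values.

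Next I would feed these expressions into the soliton ODE system of Proposition \ref{teoEDO}. Substituting $u''$, $u'$ and $(v')^{2}$ into the first equation $u''=-au'v'\phi^{2}+(v')^{2}\phi\dot{\phi}$ and simplifying collapses, after clearing $\phi$, to the single relation $acv'\phi^{2}=\dot{\phi}$, i.e. $v'=\dot{\phi}/(ac\phi^{2})$. Squaring this and comparing with the arc-length value of $(v')^{2}$ obtained above yields the algebraic constraint
\begin{equation*}
\dot{\phi}^{2}=a^{2}c^{2}(\phi^{2}-c^{2}),
\end{equation*}
which must hold at every point of the curve.

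To reach the contradiction I would now insert the explicit generating functions of the torus. From $\phi=R+r\cos u$ we have $\dot{\phi}=-r\sin u$, hence $\dot{\phi}^{2}=r^{2}\sin^{2}u=r^{2}-(\phi-R)^{2}$. The constraint then reads $r^{2}-(\phi-R)^{2}=a^{2}c^{2}(\phi^{2}-c^{2})$. Because $u$ is strictly monotone, $\phi$ ranges over an interval, so this must be a polynomial identity in the variable $\phi$; comparing the coefficients of $\phi^{2}$ forces $-1=a^{2}c^{2}$, which is impossible since $a^{2}c^{2}\geq 0$. Therefore no soliton with nonzero constant geodesic curvature can exist on $\mathbb{T}^2$.

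I expect the main obstacle to be the bookkeeping that reduces the first ODE to the clean relation $v'=\dot{\phi}/(ac\phi^{2})$, together with the justification that the resulting algebraic identity genuinely holds over an interval of $\phi$-values rather than at isolated points; the latter is secured precisely by the nonvanishing of $u'$. As an independent consistency check, one may instead substitute into the second ODE $v''=a(u')^{2}-2u'v'\dot{\phi}/\phi$, which reduces to $\ddot{\phi}=a^{2}c^{2}\phi$; since $\ddot{\phi}=R-\phi$ on the torus, this again forces $\phi$, and hence $u'$, to be constant, contradicting $u'=c/\phi\neq 0$.
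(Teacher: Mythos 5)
The paper states this corollary but never actually writes down a proof for it, so there is nothing to compare line by line; presumably the authors intend it to follow from Theorem \ref{maintheorem1}: if $\kappa$ is a nonzero constant then $\kappa=\pm\Lambda\bigl(c+\int\kappa^{2}\bigr)$ forces $\Lambda\to 0$, hence $u'\to 0$, hence $\kappa=a\phi u'\to 0$ because $\phi=R+r\cos u$ is bounded on the torus --- a contradiction. Your argument is a genuinely different and entirely self-contained route: it uses only Theorem \ref{caracROT}, Proposition \ref{teoEDO} and the explicit generating functions of $\mathbb{T}^{2}$, and I have checked the computations --- $u'=c/\phi$, $u''=-c^{2}\dot{\phi}/\phi^{3}$, the collapse of the first ODE to $v'=\dot{\phi}/(ac\phi^{2})$, and the resulting identity $r^{2}-(\phi-R)^{2}=a^{2}c^{2}(\phi^{2}-c^{2})$ --- all of which are correct; since $u'$ never vanishes, $u$ sweeps a nondegenerate interval, $\cos$ is nonconstant there, so $\phi$ takes a continuum of values and the comparison of $\phi^{2}$-coefficients ($-1=a^{2}c^{2}$) is legitimate and fatal. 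What your approach buys is an asymptotics-free, purely algebraic contradiction that is local in nature (it does not need the curve to be defined on all of $\mathbb{R}$ or any integrability of $\kappa$), whereas the route through Theorem \ref{maintheorem1} is shorter but relies on the global behaviour of the running integral of $\kappa^{2}$. One minor imprecision in your closing consistency check: constancy of $\phi$ and of $u'$ is not by itself a contradiction with $u'=c/\phi\neq 0$; the contradiction there is that $\phi=R+r\cos u$ cannot be constant while $u$ is strictly monotone. This does not affect your main argument, which stands on its own.
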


        Now, we will derive an ODE system \eqref{teoEDO} for the soliton solutions of the CSF on $\mathbb{T}^2$. Then, by using this system, we prove some numerical approximations for some solutions. 

         \begin{proposition}
        	Let $\Phi:I\subseteq\mathbb{R}\rightarrow\mathbb{T}^{2}\subset\mathbb{R}^{3}$ be a curve parametrized by the arc length on the Torus. Then, $\Phi(s)=X(u(s),\,v(s))$ is a soliton for the CSF on the torus if and only if
            \begin{eqnarray*}
                \begin{cases}
                    v''=ar^{2}(u')^2+\dfrac{2u' v'r\sin u}{R+r\cos u};\\ \\
                    u''=- v'(R+r\cos u)\left[\dfrac{ v'\sin u}{r}+a(R+r\cos u)u'\right].\\
                \end{cases}
            \end{eqnarray*}  
            Moreover, if $a:=\xi'(0)=0$ we obtain the geodesic ODE system for $\mathbb{T}^2$.
        \end{proposition}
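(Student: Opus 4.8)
The statement is the torus specialization of Proposition \ref{teoEDO}, so the natural plan is to reduce it to that proposition. The essential point to notice first, however, is that the standard torus coordinates are \emph{not} arc-length along the meridian: for $\phi(u)=R+r\cos u$ and $\psi(u)=r\sin u$ one has $|X_u|^2=\dot\phi^2+\dot\psi^2=r^2\sin^2u+r^2\cos^2u=r^2\neq 1$. Hence Proposition \ref{teoEDO}, which was derived under the normalization $|X_u|=1$ (equivalently $g_{uu}=1$), cannot be applied by the naive substitution $\phi\mapsto R+r\cos u$, $\dot\phi\mapsto -r\sin u$; doing so produces the wrong powers of $r$. The plan is therefore to reparametrize the meridian by arc length, apply Proposition \ref{teoEDO} in the normalized coordinates, and translate the resulting system back.

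First I would set $w=ru$, which is exactly the arc-length parameter of the profile curve since $|X_u|=r$ is constant. In the $(w,v)$ coordinates the torus is a revolution surface in the sense of Section \ref{back}, with profile function $\tilde\phi(w)=R+r\cos(w/r)$ satisfying $\bigl(\tfrac{d\tilde\phi}{dw}\bigr)^2+\bigl(\tfrac{d\tilde\psi}{dw}\bigr)^2=1$ and $\tfrac{d\tilde\phi}{dw}=-\sin(w/r)=-\sin u$. Since the curve $\Phi(s)$ and its arc length $s$ are unchanged by this coordinate change, applying Proposition \ref{teoEDO} verbatim to $\Phi(s)=\tilde X(w(s),v(s))$ gives
\begin{eqnarray*}
w''=-aw'v'\tilde\phi^2+(v')^2\tilde\phi\,\frac{d\tilde\phi}{dw},\qquad
v''=a(w')^2-\frac{2w'v'}{\tilde\phi}\frac{d\tilde\phi}{dw}.
\end{eqnarray*}

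Next I would undo the reparametrization via $w'=ru'$ and $w''=ru''$, together with $\tilde\phi=R+r\cos u$ and $\tfrac{d\tilde\phi}{dw}=-\sin u$. Substituting into the first equation and dividing by $r$ yields $u''=-au'v'\phi^2-\tfrac{1}{r}(v')^2\phi\sin u=-v'(R+r\cos u)\bigl[\tfrac{v'\sin u}{r}+a(R+r\cos u)u'\bigr]$, and substituting into the second gives $v''=ar^2(u')^2+\tfrac{2ru'v'\sin u}{R+r\cos u}$, which is precisely the claimed system. Every factor of $r^2$ and every $1/r$ arises from the chain-rule relations $w'=ru'$, $w''=ru''$, so this bookkeeping is the entire content of the passage from the general proposition to the torus case. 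Setting $a=0$ removes the rotation terms and returns the geodesic system, in agreement with the final sentence of Proposition \ref{teoEDO}. The converse direction is automatic, since each step is an equivalence.

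The only genuine obstacle is the normalization issue just described: one must resist substituting into Proposition \ref{teoEDO} directly and instead record that $g_{uu}=|X_u|^2=r^2$. An alternative, self-contained route that avoids the reparametrization is to repeat the Darboux-frame computation of Section \ref{back} for the torus, using the derivatives \eqref{td1}--\eqref{td2}, the arc-length constraint $r^2(u')^2+\phi^2(v')^2=1$ together with its $s$-derivative, and the soliton characterization which here reads $\kappa=ar\phi u'$ (the extra $r$ again coming from $|X_u|=r$); isolating $u''$ and $v''$ from these relations reproduces the same system. Either way the computation is routine once the coefficient $g_{uu}=r^2$ is correctly accounted for.
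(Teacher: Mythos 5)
Your derivation is correct, and it supplies a proof that the paper itself omits: the proposition is stated in Section \ref{torus} without argument, evidently as a specialization of Proposition \ref{teoEDO}. The point you isolate --- that the torus profile curve $(R+r\cos u,\,0,\,r\sin u)$ is not unit-speed, so $g_{uu}=|X_u|^2=r^2$ and the framework of Section \ref{back} (which presumes $\dot{\phi}^2+\dot{\psi}^2=1$) cannot be invoked by direct substitution --- is exactly the nontrivial content here. Indeed, the naive substitution $\phi\mapsto R+r\cos u$, $\dot{\phi}\mapsto -r\sin u$ into \eqref{sistemaROT} would give $u''=-au'v'\phi^2-r(v')^2\phi\sin u$ and $v''=a(u')^2+2ru'v'\sin u/\phi$, which is \emph{not} the system asserted in the proposition; your rescaling $w=ru$, with $w'=ru'$, $w''=ru''$ and $d\tilde{\phi}/dw=-\sin u$, produces precisely the stated equations, including the factors $ar^2$ and $\sin u/r$. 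Your answer also passes the independent checks: it is compatible with the correct arc-length relation $r^2(u')^2+\phi^2(v')^2=1$ and its $s$-derivative, a direct Darboux-frame computation with the soliton condition $\kappa=ar\phi u'$ (the extra $r$ coming from $\eta=-\phi v'\,X_u/r+ru'\,e_2$) returns the same system, and setting $a=0$ recovers the geodesic equations of the metric $r^2du^2+(R+r\cos u)^2dv^2$. In short, your proof is complete and, if anything, more careful than the paper, which never acknowledges the normalization issue even though its stated system silently incorporates it.
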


        \section{Appendix: A gallery of soliton solutions for the CSF on $\mathbb{T}^2$ and revolution catenoid}\label{appendix}
        This section is reserved to prove some numerical soliton solutions for the CSF on $\mathbb{T}^2$. To that end, we use the system provided in Proposition \ref{teoEDO}. We can conclude that those solutions are asymptotic to the outer equator.

        Furthermore, we show numerical solutions to the revolution surface generated by the revolution of the curve $(\cosh u,\,0,\,u)$ around the $z$-axis. Note that the generating curve of the catenoid does not satisfy the condition $|\phi|, |\dot{\phi}| < \infty$ of Theorem \ref{theoconverge}, the ends of the rotation solitons of this surface diverge.

        \begin{figure}[H]
            \begin{minipage}[b]{0.45\linewidth}
            	\includegraphics[scale=0.4]{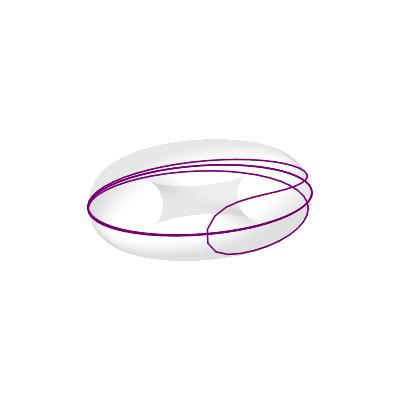}\centering
            	\caption{Initial conditions: $u(0) = \frac{\pi}{4}$, $v(0) =  \frac{\pi}{4}$, $u'(0) = 0$, $v'(0) = 1$, $\xi'(0)=\frac{1}{2}$.}
            	\label{S0a}
            \end{minipage} \hfill
            \begin{minipage}[b]{0.45\linewidth}
            	\includegraphics[scale=0.4]{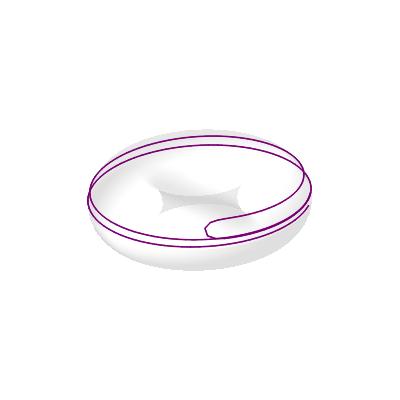}\centering
            	\caption{Initial conditions: $u(0) = \frac{\pi}{4}$, $v(0) =  \frac{\pi}{4}$, $u'(0) = 0$, $v'(0) = 1$, $\xi'(0)=2$.}
            	\label{S0b}
            \end{minipage}
        \end{figure}        
        
        \begin{figure}[H]
            \begin{minipage}[b]{0.45\linewidth}
            	\includegraphics[scale=0.4]{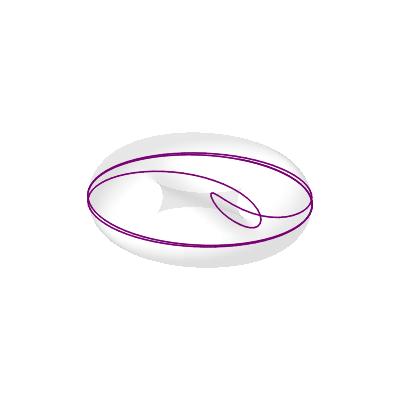}\centering
            	\caption{Initial conditions: $u(0) = \pi$, $v(0) =  \frac{\pi}{4}$, $u'(0) = 1$, $v'(0) = 1$, $\xi'(0)=\frac{1}{2}$.}
            	\label{S0c}
            \end{minipage} \hfill
            \begin{minipage}[b]{0.45\linewidth}
            	\includegraphics[scale=0.4]{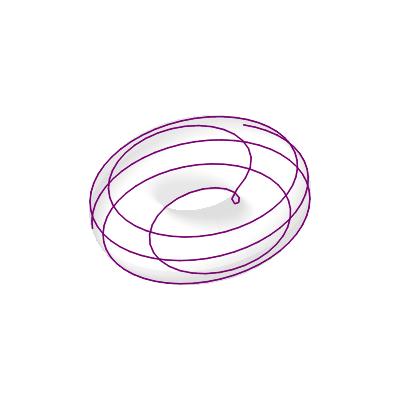}\centering
            	\caption{Initial conditions: $u(0) = \pi$, $v(0) =  \frac{\pi}{4}$, $u'(0) = 1$, $v'(0) = 1$, $\xi'(0)=2$.}
            	\label{S0d}
            \end{minipage}
        \end{figure}     

        \begin{figure}[H]
            \begin{minipage}[b]{0.45\linewidth}
            	\includegraphics[scale=0.4]{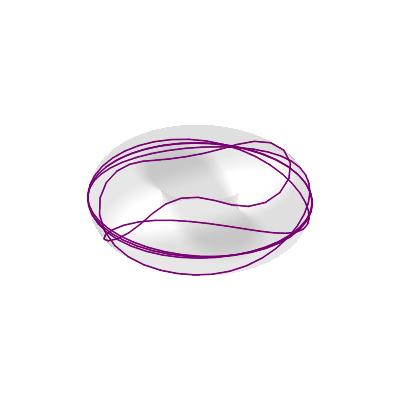}\centering
            	\caption{Initial conditions: $u(0) = 0$, $v(0) =  \pi$, $u'(0) = 1$, $v'(0) = 1$, $\xi'(0)=-0,1$.}
            	\label{Teste 3}
            \end{minipage} \hfill
            \begin{minipage}[b]{0.45\linewidth}
            	\includegraphics[scale=0.4]{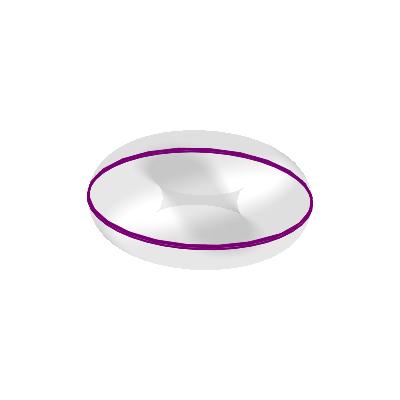}\centering
            	\caption{Initial conditions: $u(0) = 0$, $v(0) =  \pi$, $u'(0) = 1$, $v'(0) = 1$, $\xi'(0)=10$.}
            	\label{Teste 4}
            \end{minipage}
        \end{figure}      
        \begin{figure}[H]
            \begin{minipage}[b]{0.45\linewidth}
            	\includegraphics[scale=0.4]{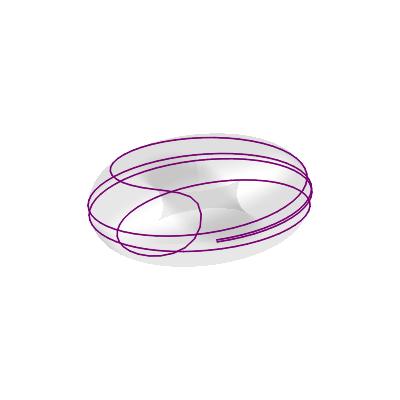}\centering
            	\caption{Initial conditions: $u(0) = \pi$, $v(0) =  \pi$, $u'(0) = 1$, $v'(0) = 0$, $\xi'(0)=1$.}
            	\label{Teste 5}
            \end{minipage} \hfill
            \begin{minipage}[b]{0.45\linewidth}
            	\includegraphics[scale=0.4]{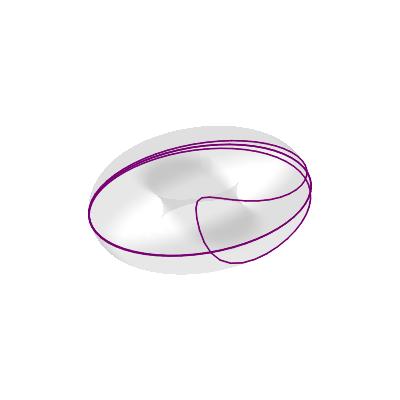}\centering
            	\caption{Initial conditions: $u(0) = \pi$, $v(0) =  \pi$, $u'(0) = 1$, $v'(0) = 0$, $\xi'(0)=-0,5$.}
            	\label{Teste 6}
            \end{minipage}
        \end{figure}

        \begin{figure}[H]
            \begin{minipage}[b]{0.45\linewidth}
            	\includegraphics[scale=0.3]{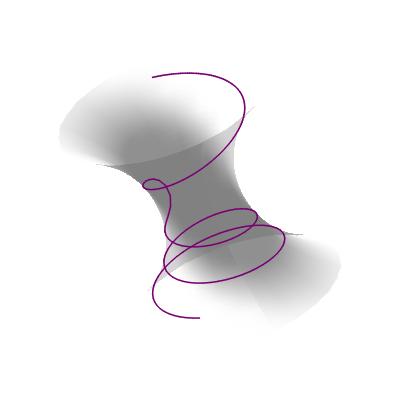}\centering
            	\caption{Initial conditions: $u(0) = 1$, $v(0) =  1$, $u'(0) = 2$, $v'(0) = 1$, $\xi'(0)=1$.}
            	\label{cat1}
            \end{minipage} \hfill
            \begin{minipage}[b]{0.45\linewidth}
            	\includegraphics[scale=0.3]{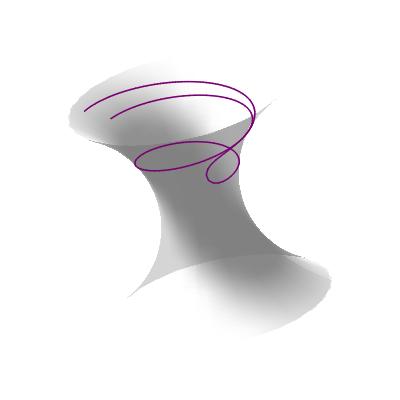}\centering
            	\caption{Initial conditions: $u(0) = 2$, $v(0) =  1$, $u'(0) = -1$, $v'(0) = 1$, $\xi'(0)=1$.}
            	\label{cat2}
            \end{minipage}
        \end{figure} 

        \begin{figure}[H]
            \begin{minipage}[b]{0.45\linewidth}
            	\includegraphics[scale=0.3]{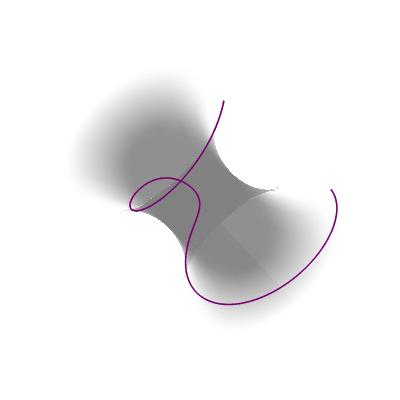}\centering
            	\caption{Initial conditions: $u(0) = 0$, $v(0) =  0$, $u'(0) = \frac{1}{5}$, $v'(0) = \frac{9}{10}$, $\xi'(0)=\frac{1}{2}$.}
            	\label{cat3}
            \end{minipage} \hfill
            \begin{minipage}[b]{0.45\linewidth}
            	\includegraphics[scale=0.3]{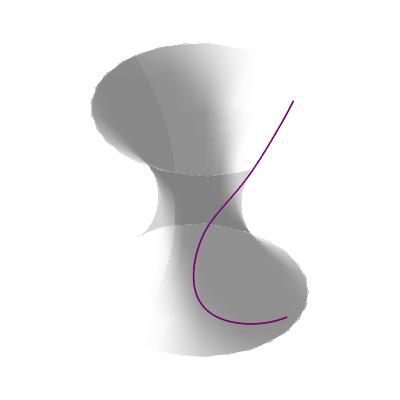}\centering
            	\caption{Initial conditions: $u(0) = 4$, $v(0) =  0$, $u'(0) = \frac{1}{4}$, $v'(0) = \frac{55}{100}$, $\xi'(0)=\frac{1}{5}$.}
            	\label{cat4}
            \end{minipage}
        \end{figure}

             \begin{acknowledgement}
       The authors are grateful to Professor Jo\~ao Paulo dos Santos for fruitful conversations about this work. Benedito Leandro was partially supported by CNPq/Brazil Grant 403349/2021-4 and 303157/2022-4. Rafael Novais was partially supported by PROPG-CAPES [Finance Code 1811476].
        \end{acknowledgement}
    

\begin{thebibliography}{99}

     \bibitem{alias}{L. J. Al\'ias; J. H. de Lira; M. Rigoli} - \textit{Mean curvature flow solitons in the presence of conformal vector fields.} J. Geom. Anal. (2020) 30:1466-1529. 
    	
    	\bibitem{CARMO GD}{M. P. do Carmo} - \textit{Differential geometry of curves and surfaces}: revised and updated second edition. Courier Dover Publications, (2016). MR3837152

    	\bibitem{cao2003}{F. Cao} - \textit{Geometric Curve Evolution and Image Processing.} Lecture Notes in Mathematics, 1805. Springer-Verlag, Berlin, 2003. x+187 pp. ISBN: 3-540-00402-5. MR1976551
    	
    	\bibitem{Colding}{T. H. Colding, W. P. Minicozzi II; E. K. Pedersen} - \textit{Mean Curvature Flow.} Bull. AMS. 52.2 (2015): 297-333. MR3312634

    	\bibitem{fecko}{M. Fecko} - \textit{Differential geometry and lie groups for physicists.} Cambridge University Press (2006). MR2260667 

    	\bibitem{Gage}{M. E. Gage} - \textit{Curve shortening on surfaces.} Annales scientifiques de l'\'Ecole Normale Sup\'erieure 23.2 (1990): 229-256. MR1046497

    	\bibitem{Garcke}{H. Garcke; R. Nürnberg} - \textit{Numerical approximation of boundary value problems for curvature flow and elastic flow in Riemannian manifolds.} Numer. Math. 149, 375–415, (2021). MR4332794

    	\bibitem{halldorsson2012}{H. P. Halldorsson} - \textit{Self-similar solutions to the curve shortening flow.} Transactions AMS. (2012): 5285-5309. MR2931330

    	\bibitem{halldorsson2015}{H. P. Halldorsson} - \textit{Self-similar solutions to the mean curvature flow in the Minkowski plane $\mathbb{R}^{1,1}$.} J. Reine Angew. Math. 704 (2015), 209-243. MR3365779


    	\bibitem{Hungerbuhler} {N. E. Hungerbuhler; B. Roost} - \textit{Mean curvature flow solitons. Analytic aspects of problems in Riemannian geometry: elliptic PDEs, solitons and computer imaging}. 129-158, S\'emin. Congr., 22, Soc. Math. France, Paris, 2011. MR3060452

    	\bibitem{Hiuri1}{H. dos Reis; K. Tenenblat} - \textit{Soliton solutions to the curve shortening flow on the sphere.} Proc. Amer. Math. Soc. 147 (2019), no. 11, 4955–4967. MR4011527

    	\bibitem{Silva2021}{F. N. da Silva; K. Tenenblat} - \textit{Self-Similar Solutions to the Curvature Flow and its Inverse on the 2-dimensional Light Cone.} arXiv:2109.03677.
    	
    	\bibitem{Silva2}{F. N. da Silva; K. Tenenblat} - \textit{Soliton solutions to the curve shortening flow on the 2-dimensional hyperbolic plane.} Rev. Mat. Iberoam. (2022), published online first. DOI 10.4171/RMI/1343.
    	
        \bibitem{slotine}{J-J. E. Slotine; W. Li} - \textit{Applied nonlinear control.} (1991) Prentice Hall, New Jersey.


    	\bibitem{woolgar}{E. Woolgar; R. Ran} - \textit{Self-similar curve shortening flow in hyperbolic $2$-space.} Proc. Amer. Math. Soc. 150 (2022), no. 3, 1301-1319. MR4375723
    \end{thebibliography}
\end{document}